\theoremstyle{plain} 
\newtheorem{lemma}[equation]{Lemma} 
\newtheorem{theorem}[equation]{Theorem}
\theoremstyle{definition}
\newtheorem{definition}[equation]{Definition} 
\theoremstyle{remark}
\newtheorem{remark}[equation]{Remark}
\newtheorem*{ack}{Acknowledgment}
\numberwithin{equation}{section}
\title[] {Weighted Weak Type Estimates for Square Functions} 
\author{Michael Lacey}   
\address{ School of Mathematics, Georgia Institute of Technology, Atlanta GA 30332, USA}
\email {lacey@math.gatech.edu}
\thanks{Research supported in part by grant NSF-DMS 0968499, 
and  a grant from the Simons Foundation (\#229596 to Michael Lacey). 
The first author benefited from the research program Operator Related Function Theory and Time-Frequency Analysis at the Centre for Advanced Study at the Norwegian Academy of Science and Letters in Oslo during 2012�2013.
}
\author{James Scurry}   
\address{ School of Mathematics, Georgia Institute of Technology, Atlanta GA 30332, USA}
\email {lacey@math.gatech.edu}
\begin{document}

	\maketitle

\begin{abstract}
For $ 1 < p < \infty $ and weight $ w \in A_p$, the following weak-type inequality holds for 
a Littlewood-Paley square function $ S$,  
	\begin{equation*}
 		\lVert S f\rVert_{ L ^{p, \infty } (w)} \lesssim
 		 [w] _{A_p} ^{\max \{ \frac 12 , \frac 1p \}} 
 		\phi([w]_{A_p})  \lVert f\rVert_{L ^{p} (w)} \,. 
 \end{equation*}
where $\phi_p(x) =1$ for $1 < p < 2$ and $\phi_p(x) = 1 + \log x$ for $2 \leq p $.
Up to the logarithmic term, these estimates are sharp.  
\end{abstract}
	
\section{Introduction} 

Our focus is on weak-type estimates for   square functions on weighted $ L ^{p}$ spaces, for Muckenhoupt $ A_p$ weights.  
Following M.~Wilson \cite{wilson1} define the intrinsic square function $G_{\alpha}$ as follows.  
\begin{definition}
Let $C_{\alpha}$ be the collection of functions $\gamma$ supported in the unit ball with mean zero and such that $| \gamma(x) - \gamma(y) | \le |x - y|^{\alpha}$. For $f \in L^1_{\rm{loc}}(\mathbb{R}^n)$ let 
\begin{align*}
A_{\alpha}f(x,t) &= \sup_{\gamma \in C_{\alpha}} |f \ast \gamma_t(x)|
\end{align*}
where $\gamma_t(x) = t^{-n} \gamma(x t^{-n})$ and take
\begin{align*}
G_{\alpha}f(x) &= \bigl( \int_{\Gamma(x)} A_{\alpha}f(y,t)^2   \frac {dy dt} {t ^{n+1}} \bigr)^{\frac{1}{2}}
\end{align*}
where $\Gamma(x):= \{ (y,t) \in \mathbb R ^{n+1}_+ \;:\; \lvert  y\rvert < t \}$ is the cone of aperture one in the upper-half plane.
\end{definition}

This square function  dominates many other square functions.   
Recall the definition of $ A_p$ weights.  

\begin{definition}\label{d:Ap} Let $ 1< p < \infty $. A weight $ w$ is in $ A_p$ if $ w $ has density $w (x)$, 
we have $ w (x) > 0 \textup{$ $ a.e.}$, and for $ \sigma (x) := w (x) ^{1-\frac p  {p-1}}$ there holds 
\begin{equation*}
[w] _{A_p} := \sup _{Q} \frac {w (Q)} {\lvert  Q\rvert } \bigl[
\frac {\sigma (Q)} {\lvert  Q\rvert }
\bigr] ^{p-1} < \infty 
\end{equation*}
where the supremum is formed over all cubes $ Q \subset \mathbb R ^{n}$.  
\end{definition}

The main result of this note is as follows. 

\begin{theorem} \label{t.intrinsic}
 For $1 < p < 3$, $ 0 < \alpha \le 1$,  and $w \in A_p$ the following inequality holds. 
\begin{gather} \label{e:G}
\lVert G _{\alpha } f\rVert_{L ^{p} (w) \to L ^{p, \infty } (w)} \lesssim [w] _{A_p} ^{\max \{ \frac 12 , \frac 1p \}}\phi([w]_{A_p}) \lVert f\rVert_{L ^{p} (w)}\,, 
		\\
\textup{where} \quad 
\phi([w]_{A_p}) := 
\begin{cases}
1 & 1 < p < 2 
 \\
 (1 + \log [w]_{A_p}) & 2\le p < 3 
\end{cases} 
 \end{gather}
\end{theorem}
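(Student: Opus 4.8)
The plan is to pass from the intrinsic square function to a positive dyadic (sparse) model and then prove the sharp weak-type bound for that model by hand, treating $1<p<2$ and $2\le p<3$ by different mechanisms. Write $K:=[w]_{A_p}^{\max\{1/2,1/p\}}\phi([w]_{A_p})$ for the target constant. First I would invoke a sparse domination for $G_\alpha$ (Lerner's local mean oscillation decomposition, or the now-standard pointwise sparse bound) to reduce matters to the sparse square function
\[
\mathcal A_{\mathcal S}f(x)=\Bigl(\sum_{Q\in\mathcal S}\langle |f|\rangle_Q^2\,\mathbf 1_Q(x)\Bigr)^{1/2},\qquad \langle h\rangle_Q=\tfrac1{|Q|}\int_Q h,
\]
taken uniformly over sparse collections $\mathcal S$ (each $Q\in\mathcal S$ carrying a major subset $E_Q$ with the $E_Q$ disjoint and $|E_Q|\ge\tfrac12|Q|$). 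Since this domination controls the quasi-norm of $L^{p,\infty}(w)$, it suffices to bound $\lVert \mathcal A_{\mathcal S}\rVert_{L^p(w)\to L^{p,\infty}(w)}\lesssim K$.

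Next I would dualize. Set $\sigma:=w^{1-p'}$, write $f=g\sigma$ so that $\lVert f\rVert_{L^p(w)}=\lVert g\rVert_{L^p(\sigma)}$ and $\langle |f|\rangle_Q=\frac{\sigma(Q)}{|Q|}\langle g\rangle^\sigma_Q$, where $\langle g\rangle^\sigma_Q=\frac1{\sigma(Q)}\int_Q|g|\,\sigma$. Using the testing description $\lVert h\rVert_{L^{p,\infty}(w)}\approx\sup_F w(F)^{-1/p'}\int_F|h|\,dw$ and Cauchy--Schwarz against $\mathbf 1_F$, the estimate reduces to the quadratic Carleson inequality
\[
\sum_{Q\in\mathcal S}\Bigl(\tfrac{\sigma(Q)}{|Q|}\Bigr)^2(\langle g\rangle^\sigma_Q)^2\,w(Q\cap F)\ \lesssim\ K^2\,w(F)^{\,1-\frac2p}\,\lVert g\rVert_{L^p(\sigma)}^2
\]
for every set $F$; the exponent $\tfrac12$ is built in here by the $L^2$ structure of the square function.

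For the core estimate I would run a principal-cube stopping time on the averages $\langle g\rangle^\sigma_Q$ to linearize the left-hand side, and apply the pointwise $A_p$ inequality $w(Q)\,(\sigma(Q)/|Q|)^{p-1}\le[w]_{A_p}\,|Q|$ to extract one factor of $[w]_{A_p}$. For $2\le p<3$, where $\max\{1/2,1/p\}=\tfrac12$, I would then split $w(Q\cap F)$ between $w(F)$ and $w(Q)$ (the splitting exponent $\theta=1-\tfrac2p\ge0$ being available precisely because $p\ge2$) and reorganize the remaining sum according to the relative size of $w(Q)$, so that the level set $F$ takes over the role played by the crude $A_\infty$/Carleson summation in the strong-type bound. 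This must buy back the gap $\tfrac1{p-1}-\tfrac12=\tfrac{3-p}{2(p-1)}$ between the strong-type exponent and the target $\tfrac12$; the residual geometric summation over scales is what produces the factor $1+\log[w]_{A_p}$ and what forces $p<3$. For $1<p<2$, where the target exponent is $1/p$ and $\theta<0$, the $F$-splitting is unavailable; instead I would argue as in the weak-type bound for the maximal function, using the \emph{disjointness} of the stopping cubes together with a single application of the pointwise $A_p$ inequality, which yields exponent $1/p$ with no logarithm.

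The main obstacle is the quadratic Carleson estimate for $2\le p<3$: isolating exactly one extra power of $[w]_{A_p}$ beyond $\tfrac12$ and demonstrating that it is absorbed by the testing set $F$ rather than by the $A_\infty$ constant $[\sigma]_{A_\infty}\lesssim[w]_{A_p}^{1/(p-1)}$ that the strong-type argument spends. Tracking the sharp exponent through the stopping tree so that the saving $\tfrac{3-p}{2(p-1)}$ is genuinely realized, and degenerates into the logarithm only as $p\to3^-$, is the delicate point; the restriction $p<3$ is exactly the threshold at which this saving vanishes, which also explains why the estimates are sharp only up to the logarithmic factor.
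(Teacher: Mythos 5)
Your reduction for the main case $2\le p<3$ has a fatal loss built into it: the ``quadratic Carleson inequality'' you arrive at by Cauchy--Schwarz against $\mathbf 1_F$ is \emph{false}, so no stopping-time argument, however delicate, can establish it. Take $p=2$, so that the factor $w(F)^{1-2/p}$ disappears, and take $F=\bigcup_{Q\in\mathcal S}Q$; then $w(Q\cap F)=w(Q)$ and (translating back, $f=g\sigma$, $\lVert g\rVert_{L^2(\sigma)}=\lVert f\rVert_{L^2(w)}$) your inequality becomes the \emph{strong-type} bound
\begin{equation*}
\sum_{Q\in\mathcal S}\langle |f|\rangle_Q^2\,w(Q)\;\lesssim\;[w]_{A_2}\bigl(1+\log[w]_{A_2}\bigr)^2\,\lVert f\rVert_{L^2(w)}^2\,,
\end{equation*}
which contradicts the sharpness of the exponent $1$ in Wittwer's $L^2(w)$ estimate. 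Concretely, on $\mathbb R$ take $w(x)=\lvert x\rvert^{1-\epsilon}$ (so $[w]_{A_2}\simeq\epsilon^{-1}$), $f=w^{-1}\mathbf 1_{[0,1]}$, and the sparse family $\mathcal S=\{[0,4^{-k}]:k\ge0\}$. Then $\lVert f\rVert_{L^2(w)}^2\simeq\epsilon^{-1}$, $\langle f\rangle_{[0,4^{-k}]}\simeq\epsilon^{-1}4^{k(1-\epsilon)}$, $w([0,4^{-k}])\simeq4^{-k(2-\epsilon)}$, so the left side is $\simeq\epsilon^{-2}\sum_k4^{-k\epsilon}\simeq\epsilon^{-3}$, while your right side is $\simeq\epsilon^{-2}(1+\log\epsilon^{-1})^2$. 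Restricting $F$ to level sets does not help, since the lowest level set is the full support. The point is structural: the weak-type gain (exponent $\tfrac12$ instead of $1$ at $p=2$) lives precisely in the fact that one measures a single level set, not the integral of $(\mathcal A_{\mathcal S}f)^2$ over sets; your Cauchy--Schwarz step converts the weak-type quantity back into a strong-type quantity and thereby discards exactly the gain the theorem asserts. (For $1<p<2$ the reduction fails even more crudely: there $1-2/p<0$, so enlarging $F$ beyond the cubes sends your right side to $0$ while the left side stays fixed.)

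Beyond this, the heart of your plan is never carried out --- you yourself flag the extraction of the exponent $\tfrac12$ plus logarithm as ``the delicate point,'' but that \emph{is} the theorem. For contrast, the paper never dualizes: it decomposes $\mathcal S$ by the size of the averages, $\mathcal S_\ell=\{Q:\langle f\rangle_{\rho Q}\simeq2^{-\ell}\}$, and estimates $w\{\sum_{Q\in\mathcal S_\ell}2^{-2\ell}\mathbf 1_Q>2^{-\epsilon\ell}\}$ level by level. For $1<p<2$ it exchanges the square for a $p$-th power and applies an elementary $[w]_{A_p}$ bound for averaging operators to the functions $f\mathbf 1_{E(Q)}$ with $E(Q)$ pairwise disjoint. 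For $p=2$ it splits the levels at $\ell_0\simeq\log_2[w]_{A_2}$: levels $\ell<\ell_0$ are handled by the same averaging lemma (this is where the logarithm is created), while levels $\ell\ge\ell_0$ are killed by iterating the $A_\infty$ property $w(E)\le(1-c/[w]_{A_2})w(Q)$ along the sparse family, yielding the decay $\exp(-c\,2^{15\ell/8}/[w]_{A_2})$ --- an ingredient your outline has no analogue of, and without which the sum over scales cannot close. Finally, $2<p<3$ is obtained by Rubio de Francia extrapolation of the $p=2$ weak-type bound; the restriction $p<3$ enters there, because one needs the maximal function to be bounded on $L^{(p/2)'}(w)$ for $w\in A_p$, i.e.\ $(p/2)'\ge p$, which is exactly $p\le3$ --- not, as you suggest, as the threshold where a ``saving'' in a Carleson estimate degenerates.
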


By example, we will show that the power on $ [w] _{A_p}$, but not the logarithmic term, is sharp.  
This result can be contrasted with these known results.  First, for the maximal function $ M$, one has the 
familiar estimate of Buckley \cite{buckley}, 
\begin{equation*}
\lVert M \rVert_{L ^{p} (w) \to L ^{p, \infty } (w)} \lesssim [w] _{A_p} ^{1/p}\,, \qquad 1< p < \infty \,.  
\end{equation*}
Thus, the square function estimate equals that for $ M$ for $ 1< p < 2$, but is otherwise larger.  
There is also the recent sharp estimate of the strong type norm of $ G _{\alpha }$: 
\begin{align*}
\lVert G _{\alpha } \rVert_{L^p(w) \rightarrow L^{p}(w)} &\lesssim [w]_{A_p}^{\max \{ \frac 1 2, \frac{1}{p-1} \} }.
\end{align*}  
The weak-type estimate above is smaller for all values of $ 1 < p <3$, and is otherwise larger by the logarithmic term.  
The case of $ p=2$ in the dyadic strong-type inequality was proved by Wittwer
 \cite{wittwer}, also see  \cite{htv}.  The dyadic case, for general $ p$, was proved by 
Cruz-Uribe-Martell-Perez \cite{cump}, while the inequality as above is the main result of Lerner's paper \cite{lerner1}.

The case $p=1$ of Theorem~\ref{t.intrinsic} holds more generally.  Chanillo-Wheeden \cite{cw}, first for the area function, and 
Wilson \cites{wilson1,wilson2}, showed that for any weight $ w$, 
\begin{align} \label{e:1}
w \{  G _{\alpha }f > \lambda \} &\lesssim \frac{1}{\lambda} \int_{\mathbb{R}^n} \lvert  f \rvert  \cdot  Mw  \; dx 
\end{align}
where $M$ is the Hardy-Littlewood maximal function. In particular, \eqref{e:G} holds for $ p=1$. 

There are interesting points of comparison with the weak-type estimates for Calder\'on--Zygmund operators.  
Hyt\"onen \cite{2912709} established the strong type estimate.   For  $T$ an $L^2(\mathbb{R}^n)$ bounded Calderon-Zygmund operator, 
there holds 
\begin{align*}
 \lVert T \rVert_{L^p(w) \rightarrow L^{p}(w)} &\lesssim [w]_{A_p} ^{\max \{\frac 1 {p-1}, 1\}}\qquad 1< p < \infty \,. 
\end{align*}
Hyt\"onen et.~al. \cite{l7} the weak-type estimate 
\begin{align*}
 \lVert T \rVert_{L^p(w) \rightarrow L^{p,\infty}(w)} &\lesssim [w]_{A_p}, \qquad 1< p < \infty \,. 
\end{align*}
But, the $ L ^{1}$-endpoint variant of \eqref{e:1} fails, as was shown by Reguera \cite{MR2799801}, for the 
dyadic case and Reguera-Thiele \cite{1011.1767} for the continuous case.  
Specializing to the case where $ w \in A_1$, Lerner-Ombrosi-Perez \cite{2480568} have shown that 
 \begin{align*}
\lVert T \rVert_{L^1(w) \rightarrow L^{1,\infty}(w)} &\lesssim [w]_{A_1} (1+\log [w]_{A_1}). 
\end{align*}
And, in a very interesting twist, some power of the logarithm is necessary, by the argument of 
Nazarov et~al.~\cite{1/7}. 
It seems entirely plausible to us that in the case of $ p=2$ in \eqref{e:G}, that some power of the logarithm is required.  

\begin{ack}
The question of looking at the weak-type inequalities was suggested to us by  A.~Volberg \cite{voPC}, and we thank 
S.~Petermichl for conversations about the problem.  
\end{ack}

\section{Proof of Theorem \ref{t.intrinsic}} 

Our argument will apply the Lerner median inequality \cite{MR2721744}---a widely used technique, see \cites{cump,lerner1,landh}, among other  papers.  
To this  end, we need some definitions.  
For a  constant $ \rho > 0$, let us set $ \rho  Q$ to be the cube with the same center as $ Q$, and side length 
$ \lvert  \rho Q\rvert ^{1/n}= \rho \lvert  Q\rvert ^{1/n}  $.   For any cube  $ Q$, 
set $ \lvert  Q\rvert \langle f \rangle_Q := \int _{Q} f \; dx  $. 
We say that a collection of dyadic cubes $ \mathcal S$ is \emph{sparse} if  there holds 
\begin{equation*}
\Bigl\lvert  \bigcup \{ Q' \in \mathcal S \;:\; Q'\subsetneq Q\} \Bigr\rvert < \tfrac 12 \lvert  Q\rvert \,, 
\qquad Q\in \mathcal S\,. 
\end{equation*}  
For a sparse collection of cubes $ \mathcal S$ and $ \rho >1$ we define 
\begin{equation*}
\bigl(T _{\mathcal S, \rho } f  \bigr) ^2 := \sum_{Q\in \mathcal S} \langle f \rangle _{\rho Q} ^2 \mathbf 1_{Q} \,. 
\end{equation*}

Fix $ f$ supported on a dyadic cube $ Q_0$.  By application to Lerner's median inequality (compare to \cite{lerner1}*{(5.8)}), 
for $ N\to \infty $, there are constants $ m_N \to 0$ so that there is a sparse collection of cubes $ \mathcal S_N$ contained 
in $ N Q_0$ so that, for $ \rho = 45$, the following pointwise estimate holds.   
\begin{equation*}
\lvert  G _{\alpha }f (x) ^2 - m_N\rvert \cdot  \mathbf 1_{NQ_0} (x)
\lesssim M f (x) ^2 + T _{\mathcal S_N} f (x) ^2 \,.  
\end{equation*}
Therefore, in order to estimate the $ L ^{p,\infty } (w)$ norm of $ G _{\alpha } f$,  it suffices to estimate 
$ L ^{p} (w)$ norm of $ M f$ and of $T _{\mathcal S} f  $, for any sparse collection of cubes $ \mathcal S$. 

Now, by Buckley's bounds \cite{buckley},  $\lVert M  \rVert_{ L ^{p} \to L^{p,\infty}(w)} \lesssim [w]_{A_p}^{1/p}$.
As a result, to obtain Theorem $\ref{t.intrinsic}$ it suffices to show this Theorem.

\begin{theorem}\label{t:sparse} For $ 1< p < 3 $, weight $ w \in A_p$, 
any sparse collection of cubes $ \mathcal S$ and any $ \rho \ge 1$, there holds 
\begin{equation*}
\lVert T _{\mathcal S} \rVert_{ L ^{p} (w) \to L ^{p, \infty } (w) } \lesssim [w] _{A_p} \phi ([w ] _{A_p}) \,. 
\end{equation*}
\end{theorem}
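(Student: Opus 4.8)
The plan is to prove the displayed estimate with $[w]_{A_p}^{\max\{1/2,1/p\}}$ in place of $[w]_{A_p}$; since $\max\{\tfrac12,\tfrac1p\}\le 1$ this is formally stronger, and it is in fact the form the reduction preceding the theorem needs in order to recover the exponent of Theorem~\ref{t.intrinsic}. Write $p'=\tfrac{p}{p-1}$, let $\sigma=w^{1-p'}$ be the dual weight of Definition~\ref{d:Ap}, and abbreviate $T_{\mathcal S}=T_{\mathcal S,\rho}$. First I would pass to the dual formulation of the weak norm: for $p>1$ one has $\lVert h\rVert_{L^{p,\infty}(w)}\approx\sup w(E)^{-1/p'}\int_E |h|\,w\,dx$, the supremum over sets with $0<w(E)<\infty$. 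Testing against $f=g\sigma$, for which $\lVert f\rVert_{L^p(w)}=\lVert g\rVert_{L^p(\sigma)}$, the theorem reduces to the uniform bound
\begin{equation*}
\int_E T_{\mathcal S}(g\sigma)\,w\,dx\lesssim [w]_{A_p}^{\max\{1/2,1/p\}}\,\phi([w]_{A_p})\,w(E)^{1/p'}\,\lVert g\rVert_{L^p(\sigma)}.
\end{equation*}

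Next, since $T_{\mathcal S}$ is a square function, I would apply Cauchy--Schwarz, $\int_E T_{\mathcal S}(g\sigma)\,w\le w(E)^{1/2}\bigl(\int_E T_{\mathcal S}(g\sigma)^2\,w\bigr)^{1/2}$, and expand $\int_E T_{\mathcal S}(g\sigma)^2\,w=\sum_{Q\in\mathcal S}\langle g\sigma\rangle_{\rho Q}^2\,w(E\cap Q)$. This step is essentially lossless, since at the extremal set $E=\{T_{\mathcal S}(g\sigma)>\lambda\}$ the integrand is $\approx\lambda$ on $E$. Matters thus reduce to the quadratic testing inequality
\begin{equation*}
\sum_{Q\in\mathcal S}\langle g\sigma\rangle_{\rho Q}^2\,w(E\cap Q)\lesssim [w]_{A_p}^{2\max\{1/2,1/p\}}\,\phi([w]_{A_p})^2\,w(E)^{1-2/p}\,\lVert g\rVert_{L^p(\sigma)}^2 .
\end{equation*}

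To treat this I would recognize it as a weighted Carleson embedding. Writing $\langle g\sigma\rangle_{\rho Q}=\langle\sigma\rangle_{\rho Q}\,\langle g\rangle^{\sigma}_{\rho Q}$, where $\langle g\rangle^{\sigma}_{\rho Q}:=\sigma(\rho Q)^{-1}\int_{\rho Q}g\,\sigma$, the left side becomes $\sum_Q\alpha_Q(\langle g\rangle^{\sigma}_{\rho Q})^2$ with $\alpha_Q=\langle\sigma\rangle_{\rho Q}^2\,w(E\cap Q)$. Since $\sigma\in A_\infty$ is doubling, the averages over $\rho Q$ may be compared with those over $Q$, and the Carleson embedding theorem (whose constant is absolute in $\sigma$) reduces everything to the Carleson condition $\sum_{Q\subseteq R}\alpha_Q\lesssim A\,\sigma(R)$. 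For $p\neq2$ an intermediate H\"older step between $L^2(\sigma)$ and $L^p(\sigma)$ on the support, combined with the $A_p$ comparison of $w$- and $\sigma$-averages, is what produces the factor $w(E)^{1-2/p}$ and the exponent $\max\{1/2,1/p\}$.

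The hard part is verifying the Carleson condition with the sharp constant $A$. Using $\langle\sigma\rangle_{\rho Q}^{p-1}\le[w]_{A_p}\langle w\rangle_{\rho Q}^{-1}$ to trade one power of the weight, the estimate reduces, for instance at $p=2$, to
\begin{equation*}
\sum_{\substack{Q\in\mathcal S\\ Q\subseteq R}}\sigma(Q)\,\frac{w(E\cap Q)}{w(Q)}\lesssim \phi([w]_{A_p})^2\,\sigma(R).
\end{equation*}
The obstacle is precisely here: bounding the density $w(E\cap Q)/w(Q)$ crudely by $1$ only recovers the strong-type constant, so the weak-type gain must be extracted from the fact that this density is small for the bulk of the cubes. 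I would organize the sum by the dyadic size of $w(E\cap Q)/w(Q)$ through a stopping-time decomposition of $E$ along the sparse tree, playing the gain from small density against the $A_\infty$ constant of $\sigma$ (which is $\lesssim[w]_{A_p}^{1/(p-1)}$). Balancing these two effects sharply is the crux: the borderline summation over density scales leaves the single logarithm $\phi$ when $2\le p<3$, whereas for $1<p<2$ the favorable exponent arithmetic keeps the scale-sum geometrically convergent and no logarithmic factor is needed.
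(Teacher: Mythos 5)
Your opening reduction is sound: for $p>1$ the dual characterization $\lVert h\rVert_{L^{p,\infty}(w)}\approx \sup_{E} w(E)^{-1/p'}\int_E |h|\,dw$ is valid, so it does suffice to bound $\int_E T_{\mathcal S}(g\sigma)\,w\,dx$ uniformly over sets $E$. The fatal step is the next one: Cauchy--Schwarz here is not ``essentially lossless,'' and the quadratic testing inequality you reduce to is \emph{false} with the constant you need, so the ``crux'' you defer to a stopping-time argument at the end cannot be carried out. Concretely, at $p=2$ your target inequality reads $\sum_{Q\in\mathcal S}\langle g\sigma\rangle_{\rho Q}^2\, w(E\cap Q)\lesssim [w]_{A_2}\bigl(1+\log [w]_{A_2}\bigr)^2\lVert g\rVert_{L^2(\sigma)}^2$ uniformly in $E$; letting $E\uparrow\mathbb R^n$ this is precisely the strong-type bound $\lVert T_{\mathcal S}(g\sigma)\rVert_{L^2(w)}\lesssim [w]_{A_2}^{1/2}\bigl(1+\log[w]_{A_2}\bigr)\lVert g\rVert_{L^2(\sigma)}$. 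But the sharp strong-type exponent at $p=2$ is $1$, not $1/2$ (Wittwer, Lerner --- cited in the paper), and sparse operators saturate it; equivalently, with $E=\mathbb R^n$ the left side of your Carleson condition $\sum_{Q\subseteq R}\sigma(Q)\,w(E\cap Q)/w(Q)\lesssim \phi^2\sigma(R)$ is $\sum_{Q\subseteq R}\sigma(Q)$, which for sparse families can be of size $[w]_{A_2}\,\sigma(R)$, far exceeding $\phi^2\sigma(R)$. For $1<p<2$ the situation is even worse: there $1-2/p<0$, so your quadratic testing inequality fails outright as $w(E)\to\infty$. The flaw in the heuristic is that on $E=\{T_{\mathcal S}(g\sigma)>\lambda\}$ the function is $\ge\lambda$ but by no means $\approx\lambda$; the portion of $E$ where it is much larger dominates $\int_E T_{\mathcal S}(g\sigma)^2 w$, and discarding that loss is exactly what separates the weak norm (exponent $\tfrac12$ at $p=2$) from the strong norm (exponent $1$). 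Cauchy--Schwarz throws that distinction away.

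The paper never dualizes; it exploits weak type on the distribution-function side, which is where the gain over the strong norm actually lives. It splits $\mathcal S$ by the size of the averages, $\mathcal S_\ell=\{Q: 2^{-\ell-1}<\langle f\rangle_{\rho Q}\le 2^{-\ell}\}$ (cubes with average $>1$ are absorbed into $\{Mf>1\}$ via Buckley's bound), and uses sparseness to produce pairwise disjoint sets $E(Q)\subset\rho Q$ with $\langle f\mathbf 1_{E(Q)}\rangle_{\rho Q}\gtrsim 2^{-\ell}$. For $1<p<2$, each scale $\ell$ is handled by Chebyshev together with an elementary $A_p$ averaging lemma costing $[w]_{A_p}^{1/p}$ per scale, and trading the square for a $p$th power leaves enough geometric decay in $\ell$ to sum. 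At $p=2$ the scales $\ell<\ell_0\approx\log_2[w]_{A_2}$ are handled the same way (this cutoff is the sole source of the logarithm), while the scales $\ell\ge\ell_0$ use the $A_\infty$ exponential decay estimate $w(E)<(1-c/[w]_{A_2})\,w(Q)$ iterated to give $\exp(-c\,2^{15\ell/8}/[w]_{A_2})$ decay; the range $2<p<3$ then follows by Rubio de Francia extrapolation of the $p=2$ weak-type bound. If you want to salvage a dual-side argument, you would have to truncate $T_{\mathcal S}$ at height $\lambda$ before integrating over $E$ rather than apply Cauchy--Schwarz globally; as written, your reduction asks for a strong-type estimate that provably does not hold.
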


We turn to the proof of this estimate.   With $ \rho >1$ fixed, it is clear that it suffices to consider 
collections $ \mathcal S$ which satisfy this strengthening of the defintion of sparseness: 
On the one hand, 
\begin{equation*}
\Bigl\lvert  \bigcup \{ Q' \in \mathcal S \;:\; Q'\subsetneq Q\} \Bigr\rvert < \tfrac  {\lvert  Q\rvert }{ 8 \rho ^{n}}  \,, 
\qquad Q\in \mathcal S\,. 
\end{equation*}  
and on the other, if $ Q \neq Q'\in \mathcal S $ and $ \lvert  Q\rvert = \lvert  Q\rvert $, then $ \rho Q \cap \rho Q'= \emptyset $. 
We can assume these conditions, as a sparse collection is the union of $ O (\rho ^{n+1})$ subcollections which meet these 
conditions, and we are not concerned with the effectiveness of our estimates in $ \rho $.

Let $\mathcal S^1$ consist of all $Q$ such that $\langle f \rangle_{\rho Q} >  1 $. Then if $Q \in \mathcal  S^1$ we have $ Q \subset \{ Mf >  1 \}$ so that 
\begin{align*}
 w \Bigl\{ \sum_{Q \in \mathcal S ^{1}} \langle f \rangle_{\rho Q}^2 \mathbf 1_{Q} > 1  
	\Bigr\} &\le w \Bigl\{
	\bigcup _{Q\in \mathcal S ^{1}} Q 
	\Bigr\} 
	\le w \{ M f > 1 \} \lesssim [w] _{A_p}   \lVert f\rVert_{L ^{p} (w)} ^{p}.
	\end{align*}

We split the remaining cubes into disjoint collections setting
\begin{equation*} 
	\mathcal S _{\ell } := \{Q \in \mathcal S \;:\; 2 ^{- \ell -1}  < \langle f \rangle _{\rho Q}  \le 2 ^{- \ell }  \}\,, \qquad \ell =0, 1 ,\dotsc, 
\end{equation*}
Now let $E(Q) = \rho Q \backslash R (Q)$ where  
and $R(Q) = \bigcup \{ \rho Q': Q' \subsetneq  \rho Q,\ Q' \in \mathcal S_{\ell} \}$. Notice,
that  $|R(Q)| <  \tfrac 18 \lvert  \rho Q\rvert $, whence 
\begin{align*}
\langle f \mathbf{1}_{E(Q)} \rangle_{\rho Q} &=
\langle f \rangle_{\rho Q} - \langle f \mathbf{1}_{R(Q)} \rangle_{\rho Q} \\
&\ge \langle f \rangle_{\rho Q }- 8^{-1} \langle f \rangle_{R(Q)} \\
&\ge 2^{-\ell-1}  - 8^{-1} 2^{-\ell} \gtrsim 2^{-\ell}   \,. 
\end{align*} 
That is, we have good lower bound on these averages, and moreover the sets $ E (Q)$ are pairwise disjoint in $ Q\in \mathcal S _{\ell }$. 
We will estimate 
\begin{equation} \label{e:E}
\sum_{Q\in \mathcal S _{\ell }} 2 ^{- 2 \ell } \mathbf 1_{Q} 
\lesssim \sum_{Q\in \mathcal S _{\ell }} \langle f \mathbf{1}_{E(Q)} \rangle_{\rho Q} ^2 \mathbf 1_{Q} \,. 
\end{equation}

The following lemma is elementary.  

\begin{lemma}\label{l.weakrho} Let $ \mathcal T$ be a  collection of cubes.  
	We have, for $ 1< p < \infty $, and sequences $ \{g _{Q} \;:\; Q\in \mathcal T\}$ of non-negative functions, 
\begin{equation*}
 \Bigl\lVert 
\Bigl[
\sum_{Q\in \mathcal T}  \langle g _{Q} \rangle_{\rho Q} ^p \mathbf{1}_{Q}  
\Bigr] ^{1/p} 
\Bigr\rVert_{L ^{p} (w)} \lesssim   [w] _{A_p} ^{1/p} 
	\Bigl\lVert 
\Bigl[
\sum_{Q\in \mathcal T}  g _{Q} ^p 
\Bigr] ^{1/p} 
	\Bigr\rVert_{L ^{p} (w)}  
\end{equation*}
\end{lemma}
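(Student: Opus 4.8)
The plan is to reduce this vector-valued inequality to a single per-cube estimate, since the functions $g_Q$ carry their own index and there is no genuine interaction between distinct cubes. Raising both sides to the $p$-th power (which converts the target constant $[w]_{A_p}^{1/p}$ into $[w]_{A_p}$) and using Tonelli, the left-hand side becomes $\int_{\mathbb R^n}\sum_{Q\in\mathcal T} \langle g_Q\rangle_{\rho Q}^p \mathbf 1_Q\, w\,dx = \sum_{Q\in\mathcal T} \langle g_Q\rangle_{\rho Q}^p\, w(Q)$, while the right-hand side becomes $\sum_{Q\in\mathcal T}\int_{\mathbb R^n} g_Q^p\, w\,dx$. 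Thus it suffices to prove the term-by-term claim
\begin{equation*}
\langle g_Q\rangle_{\rho Q}^p\, w(Q) \lesssim [w]_{A_p}\int_{\rho Q} g_Q^p\, w\,dx\,, \qquad Q \in \mathcal T\,.
\end{equation*}

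For the per-cube claim I would run the standard Hölder duality against the complementary measures $w\,dx$ and $\sigma\,dx$. Writing $g_Q = (g_Q w^{1/p})\,w^{-1/p}$ and applying Hölder with exponents $p$ and $p'$ on $\rho Q$ gives
\begin{equation*}
\int_{\rho Q} g_Q\,dx \le \Bigl(\int_{\rho Q} g_Q^p\, w\,dx\Bigr)^{1/p}\, \sigma(\rho Q)^{1/p'}\,,
\end{equation*}
using the identity $w^{-p'/p} = \sigma$. Raising to the $p$-th power, dividing by $\lvert \rho Q\rvert^p$, and recalling $p/p' = p-1$ yields
\begin{equation*}
\langle g_Q\rangle_{\rho Q}^p \le \frac{\sigma(\rho Q)^{p-1}}{\lvert \rho Q\rvert^p}\int_{\rho Q} g_Q^p\, w\,dx\,.
\end{equation*}

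Finally I would multiply through by $w(Q)$, bound $w(Q) \le w(\rho Q)$, and recognize the product $\frac{w(\rho Q)}{\lvert \rho Q\rvert}\bigl[\frac{\sigma(\rho Q)}{\lvert \rho Q\rvert}\bigr]^{p-1} \le [w]_{A_p}$, which is exactly the $A_p$ testing constant evaluated on the cube $\rho Q$. This proves the per-cube claim, and summing over $Q \in \mathcal T$ (using $\int_{\rho Q} \le \int_{\mathbb R^n}$) completes the argument. There is no serious obstacle here — the lemma is genuinely elementary, as the authors note — and the only bookkeeping to watch is the mismatch between the cube $Q$ carrying the weight mass $w(Q)$ and the enlarged cube $\rho Q$ carrying the average; this is absorbed harmlessly by the monotonicity $w(Q) \le w(\rho Q)$ together with the freedom to test the $A_p$ condition at $\rho Q$ rather than at $Q$.
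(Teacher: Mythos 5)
Your proof is correct and is essentially the paper's own argument: the paper likewise reduces to a per-cube estimate and obtains it by exchanging the Lebesgue average for an average against the dual weight $\sigma = w^{1-p'}$ and applying Jensen's inequality, which is exactly your H\"older step in different notation, before invoking the $A_p$ product on $\rho Q$ and summing over the cubes. The only cosmetic difference is the bookkeeping ($\sigma$-averages versus the $w^{1/p}w^{-1/p}$ splitting), so there is nothing to reconcile.
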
 

\begin{proof}
This is a well-known estimate on the $ A_p$ norm of simple averaging operators. 
Writing $ \sigma (x) = w (x) ^{1-p'}$,   we will exchange out an average over Lebesgue measure 
for an average over $ \sigma $-measure.  Thus, set 
\begin{equation*}
\langle \psi  \rangle ^{\sigma } _{Q} := \sigma (Q) ^{-1} \int _{Q} \psi \; d \sigma \,.
\end{equation*}
We can estimate as follows. 
\begin{align*}
\int_{\mathbb{R}^n}
 \langle   g  \rangle_{\rho Q} ^p \mathbf{1}  \;d w
&=   \Bigl( \langle   g  \sigma^{-1}  \rangle_{\rho Q} ^{\sigma} \Bigr) ^p \Bigl( \frac{\sigma(\rho Q)}{|\rho Q|} \Bigr)^p w(\rho Q) \\
&\le [w]_{A_p}   \sigma(Q) \bigl( \langle   g  \sigma^{-1}  \rangle_{\rho Q} ^{\sigma} \bigr) ^p \\
&= [w]_{A_p}  \int_{\mathbb{R}^n} g^p  \sigma^{-p}\; d \sigma= [w]_{A_p} \int_{\mathbb{R}^n}   g^p \; dw.   
\end{align*}
And the Lemma is a trivial extension of this inequality. 
\end{proof}

\subsection{The Case of $1 < p < 2$}

We let $k_\epsilon \simeq \epsilon^{-1}$ be a constant such that
\begin{align*}
w \Bigl\{
	\sum_{Q \in \mathcal S \backslash \mathcal S ^{1}}  \langle f \rangle_{\rho Q}^2 \mathbf{1}_{Q}  >  k_{\epsilon }
	\Bigr\} 
	& =
		w \Bigl\{
		\sum_{\ell =0} ^{\infty }\sum_{Q \in \mathcal S _{\ell }}  2 ^{-2\ell} \mathbf{1}_{Q}  >   \sum_{\ell =0} ^{\infty } 2 ^{- \epsilon \ell }
	\Bigr\} 
	& \le 
	\sum_{\ell =0} ^{\infty }
		w \Bigl\{
		\sum_{Q \in \mathcal S _{\ell }}  2 ^{-2\ell} \mathbf{1}_{Q}  > 2 ^{- \epsilon \ell }
	\Bigr\}.
\end{align*}

For fixed $\ell$ we may estimate, using \eqref{e:E}, and exchanging out a square for a $ p$th power, 
 \begin{align*}
w \Bigl\{
\sum_{Q \in \mathcal S _{\ell }}   2 ^{-2\ell} \mathbf{1}_{Q}  > 2 ^{- \epsilon  \ell }
	\Bigr\} 
	& \le
	w \Bigl\{
	\sum_{Q \in \mathcal S _{\ell }}  \langle f \mathbf{1} _{E (Q)} \rangle _{\rho Q}^{p} \mathbf{1}_{Q}  \gtrsim 2 ^{ (2-p-\epsilon)  \ell    }
	\Bigr\} 
	\\
	& \lesssim  [w] _{A_p}   2 ^{- (2-p-\epsilon)p/2  \ell } \lVert f\rVert_{L ^{p} (w)} ^{p}  
\end{align*}
where in the last inequality we have used Lemma \ref{l.weakrho}. Choosing $\epsilon = 1 - p/2$ and summing over $\ell$ gives the result.  

\subsection{The case of $p=2$}
Of course the estimate is a bit crude.  For a large constant $ C$, take $ \ell _0$ to be the integer part of 
$ C(1+\log_2 [ w ] _{A_2}) $.   Estimate 
 \begin{align*}
 w \Bigl\{
	\sum_{Q \in \mathcal S \backslash \mathcal S ^{1}}  \langle f \rangle_{\rho Q }^2 \mathbf{1}_{Q} > 2 
	\Bigr\} & \le
	w \Bigl\{
		\sum_{\ell = 0} ^{\ell_0 -1 } \sum_{Q \in \mathcal S _{\ell }} \langle f \rangle_{\rho Q}^2 \mathbf{1}_{Q}  >  1
	\Bigr\} 
	+
		w \Bigl\{ \sum_{\ell = \ell_0} ^{ \infty  }
		\sum_{Q \in \mathcal S _{\ell }}   \langle f \rangle_{\rho Q}^2 \mathbf{1}_{Q}  >  
		\sum_{\ell = \ell_0} ^{ \infty  } 2 ^{- \ell /8}
	\Bigr\} \\ 
&\le 	\sum_{\ell = 0} ^{\ell_0 -1 }  w \Bigl\{
		\sum_{Q \in \mathcal S _{\ell }} \langle f \rangle_{\rho Q}^2 \mathbf{1}_{Q}  >   \frac 1 {\ell _0}
	\Bigr\} 
	+
\sum_{\ell = \ell_0} ^{\infty }		w \Bigl\{ 
		\sum_{Q \in \mathcal S _{\ell }}   \langle f \rangle_{\rho Q}^2 \mathbf{1}_{Q} \gtrsim 2^{-\ell/8 }     
	\Bigr\}	
\end{align*}

Recall the  the $ A _{\infty }$ property for $ A_2 $ weights:  For any cube $ Q$ and $ E \subset Q$, with $ \lvert  E\rvert < \tfrac 12 \lvert  Q\rvert  $,  there holds 
\begin{equation*}
w (E) < \bigl(1 - \tfrac c {[w] _{A_2}} \bigr) w (Q) \,, 
\end{equation*}
for absolute choice of constant $ c$. Applying this in an inductive fashion, we see that 
 \begin{align*}
 	 w \bigl\{ \sum_{Q\in \mathcal S _{\ell }}  \langle f \rangle_{\rho Q} ^2  \mathbf 1_{Q} \gtrsim 2^{- \ell /8 } \bigr\} 
 	 & \le 
 	 w \bigl\{ \sum_{Q\in \mathcal S _{\ell }}   \mathbf 1_{Q} \gtrsim 2 ^{15\ell /8}\bigr\} 
\\
& \lesssim \operatorname {exp}(( - c 2 ^{15\ell /8}) / [w] _{A_2} ) 
w \{ \bigcup \{Q \;:\; Q\in \mathcal S _{\ell }\}\} 
\\ 
& \lesssim [w] _{A_2} 2 ^{ \ell } \operatorname {exp}( - c 2 ^{15\ell /8} / [w] _{A_2} ) \lVert f\rVert_{ L ^2 (w)} ^2 \,. 
\end{align*} 
where $0 < c < 1$ is a fixed constant. This is  summable in $ \ell\ge \ell _0 $ to at most a constant, 
for $ C$ sufficiently large. 

For the case of $ 0\le \ell < \ell _0$, we use the estimate of Lemma \ref{l.weakrho} to obtain 
\begin{align*}
	\sum_{\ell =0} ^{\ell _0-1} 	w \bigl\{\sum_{Q\in \mathcal S _{\ell } }  \langle f \mathbf{1} _{E (Q)}\rangle_{\rho Q} ^2  \mathbf 1_{Q} > 
	\frac 1 { \ell _0} 
	\bigr\} 
	& \lesssim \ell_0 ^{2} [w] _{A_2} \lVert f\rVert_{L ^2 (w)} ^2 = 
	[w] _{A_2} (1+\log [w] _{A_2}) ^2 \lVert f\rVert_{L ^2 (w)} ^2\,  
\end{align*}
concluding the proof of this case.

\subsection{The case of $2 < p $}
The case of $ p=2$ is the critical case, and so the case of $ p$ larger than $ 2$ follows from 
extrapolation. However, here we are extrapolating weak-type estimates. It is known 
that this is possible, with estimates on constants. We outline the familiar argument as found in \cite{2480568}. 

We have 
\begin{align*}
w \Bigl\{ \Bigl[ \sum_{Q\in \mathcal S}  \langle    f    \rangle_Q^2 \mathbf{1}_{Q} \Bigr] ^{1/2} > 1  \Bigr\}^{\frac{1}{p}} &= 
 \Bigl( w \Bigl\{ \Bigl[ \sum_{Q\in \mathcal S}  \langle    f    \rangle_Q^2 \mathbf{1}_{Q} \Bigr] ^{1/2} > 1  \Bigr \} ^{\frac{2}{p}} \Bigr)^{\frac{1}{2}} \\
&=  \Bigl( hw \Bigl\{ \Bigl[ \sum_{Q\in \mathcal S}  \langle    f    \rangle_Q^2 \mathbf{1}_{Q} \Bigr] ^{1/2} > 1  \Bigr \}  \Bigr)^{\frac{1}{2}} 
\end{align*}
for $h \in L^{q'}(w)$ with norm $1$, where $q = \frac{p}{2}$. Now by the Rubio de Francia algorithm there is a function $H$ such that
\begin{itemize}  
\item[i.] $h \leq H$ 
\item[ii.] $\lVert H \rVert_{L^{q'}(w)} \lesssim \lVert h \rVert_{L^{q'}(w)}$
\item[iii.] $Hw \in A_1$
\item[iv.] $[Hw]_{A_1} \lesssim [w]_{A_p}$.
\end{itemize}
We can continue,
\begin{align*}
\bigl( hw \Bigl\{ \Bigl[ \sum_{Q\in \mathcal S}  \langle    f    \rangle_Q^2 \mathbf{1}_{Q} \Bigr] ^{1/2} > 1  \Bigr \} \bigr)^{\frac{1}{2}} &\leq
\bigl( Hw \Bigl\{ \Bigl[ \sum_{Q\in \mathcal S} \langle    f    \rangle_Q^2 \mathbf{1}_{Q}  \Bigr] ^{1/2} > 1  \Bigr \} \bigr)^{\frac{1}{2}}\\ 
&\lesssim \Biggl(  [Hw]_{A_2} \bigl(1 + \log{[Hw]_{A_2}} \bigr)^2  \int_{\mathbb{R}} f^2 H w \Biggr)^{\frac{1}{2}} \\
&\lesssim [H w]_{A_2}^{\frac{1}{2}} \bigl(1 + \log{[Hw]_{A_2}} \bigr)\lVert f \rVert_{L^p(w)} \lVert H \rVert_{L^{q'}(w)} \\
&\lesssim [w]_{A_p}^{\frac{1}{2}} \bigl(1 + \log{[w]_{A_p}} \bigr)  \lVert f \rVert_{L^p(w)}. 
\end{align*}

\begin{remark}\label{r:vector} 
The estimate in Theorem~\ref{t:sparse} is a weighted estimate for a  vector-valued dyadic positive 
operator.  One of us \cite{1007.3089} has characterized such inequalities in terms of testing conditions.  
Using this condition, we could not succeed in eliminating the logarithmic estimate in the case of $ p=2$. 
It did however suggest one of the examples in the next section.  
\end{remark}

\section{Examples} 
 
The usual example of a power weight in one dimension $ w (x)= \lvert  x\rvert ^{ \epsilon-1 } $, with $0< \epsilon < 1 $ 
has $ [w] _{A _p } \simeq \epsilon ^{-1}$. It is straight forward to see that for $ \sigma (x) = w (x) ^{1-p'}$, 
and appropriate constant $ c = c (p)$, we have 
\begin{equation*}
c ^{p} w \{ S \mathbf 1_{[0,1]} > c\} = c ^{p} w ([0,1]) \simeq \epsilon ^{-1}  
\end{equation*}
whereas $ \lVert \mathbf 1_{[0,1]}\rVert_{L ^{p} (\sigma )} \simeq 1$.  Hence, the smallest power on $ [w] _{A_p}$ that we 
can have is $ \frac 1p$.  

There is a finer example, expressed through the dual inequality, which shows that the power on $ [w ] _{A_p}$ can never be less than $ \frac12$. 
 Consider the Haar square function inequality 
$ \lVert S (\sigma f)\rVert_{L ^{p, \infty } (w)} \lesssim \lVert f\rVert_{L ^{p} (\sigma )}$, where $ \sigma (x) = w (x) ^{1-p'}$ 
is the dual measure.  Viewing this as  a map from $ L ^{p} (\sigma )$ to $ L ^{p, \infty } (w; \ell ^2 )$, 
the dual map takes $ L ^{p', 1} (w; \ell ^2 )$ to $ L ^{p'} (\sigma )$, and the inequality is 
\begin{equation} \label{e.dsq}
\Bigl\lVert  \sum_{Q}  \lvert  Q\rvert ^{1/2} \langle a_Q \cdot w \rangle_Q h _{Q}  \Bigr\rVert_{L ^{p'} (\sigma )} 
\lesssim \Bigl\lVert\Bigl[ \sum_{Q} a_Q ^2  \Bigr] ^{1/2}  \Bigr\rVert_{ L ^{p',1} (w)} \,. 
\end{equation}
In the inequality, $ \{a_Q\}$ are a sequence of measurable functions.  
We show that the implied constant is at least $ C_p [w] _{A_p} ^{\beta  }$, for any $ 0< \beta < \tfrac 12 $.

In the inequality \eqref{e.dsq}, the right hand side is independent of the signs of the functions $ a_Q$. Hence it, with the 
standard Khintchine estimate, implies the   inequality 
\begin{equation*}
\Bigl\lVert  \Bigl[\sum_{Q}   \langle a_Q \cdot w \rangle_Q ^2  \mathbf 1_{Q} \Bigr] ^{1/2}   \Bigr\rVert_{L ^{p'} (\sigma )} 
\lesssim \Bigl\lVert\Bigl[ \sum_{Q} a_Q ^2  \Bigr] ^{1/2}  \Bigr\rVert_{ L ^{p',1} (w)} 
\end{equation*}
As the left hand side is purely positive, we prefer this form.  
Indeed, we specialize the inequality above to one which is of testing form.
Take the functions $ \{a_k\}$ to be 
\begin{equation*}
a _{k} (x) :=c  \sum_{j=k+1} ^{\infty } \frac 1 { (j-k) ^{ \alpha }  } \mathbf 1_{[ 2 ^{-j}, 2 ^{-j+1})}\,, \qquad \tfrac 12 < \alpha < 1\,, \ k\in \mathbb N \,. 
\end{equation*}
For appropriate choice of constant $ c= c _{\alpha }$, there holds $ \sum_{k=1} ^{\infty } a_k (x) ^2 \le  \mathbf 1_{[0,1]} $, 
whence we have 
\begin{equation*}
\Bigl\lVert  \Bigl[\sum_{k=1} ^{\infty } a_k (x) ^2  \Bigr] ^{1/2}  \Bigr\rVert_{L ^{p',1} (w)} \lesssim w ([0,1]) ^{1/p'} \,. 
\end{equation*}

The next and critical point, concerns the terms $  \langle a_k \cdot w \rangle _{[0, 2 ^{-k})}$. 
Recalling $  w (x)= \lvert  x\rvert ^{\epsilon-1 } $, we have 	
\begin{align*}
 \langle a_k \cdot w \rangle _{[0, 2 ^{-k})} &\simeq  2 ^{k} 
 \sum_{j=k+1} ^{\infty } 
  \frac 1 { (j-k) ^{\alpha }  }   2 ^{- \epsilon j} 
  \\
  &= 2 ^{k (1- \epsilon )}  \sum_{j=1} ^{\infty } 
  \frac 1 { j ^{\alpha }  }  2 ^{- \epsilon j} 
 \\
 & \simeq 2 ^{k (1- \epsilon )} \int _{1} ^{\infty } \frac 1 {x ^{\alpha }  } 2 ^{- \epsilon x} \; dx \simeq \epsilon ^{-1+ \alpha } 2 ^{k (1- \epsilon )} \,. 
\end{align*}
From this we conclude that the testing term is 
\begin{align*}
\int _{[0,1]} \Bigl[ \sum_{k=1} ^{\infty }  \langle a_k \cdot w \rangle _{[0, 2 ^{-k})} ^2   \mathbf 1_{[0, 2 ^{-k})} \Bigr]^{p'/2}  \; d \sigma  
& \simeq  \epsilon ^{(-1+ \alpha) p' }  
\int _{[0,1]} w (x) ^{p'} \; d \sigma (x)
\\
& \simeq   \epsilon ^{(-1+ \alpha) p'  }  
\int _{[0,1]} w (x) ^{p'}  w (x) ^{1-p'} \; dx  
\\&
\simeq   [w] _{A_p} ^{(1 - \alpha) p' } w ([0,1]) \,.   
\end{align*}
Therefore, the power on $ [w] _{A_p}$ in the implied constant in \eqref{e.dsq} can never be strictly less than $ \frac 12$, 
since  $ \alpha > 1/2 $ is arbitrary.

\begin{bibsection}
\begin{biblist}
\bib{buckley}{article}{
author={Buckley,S.M.},
title={Estimates for Operator Norms on Weighted Spaces and Reverse Jensen Inequalities},
journal={Transactions of the American Mathematical Society},
volume={340},
number={1},
pages={253-272},
year={1993}
}

\bib{cw}{article}{
   author={Chanillo, Sagun},
   author={Wheeden, Richard L.},
   title={Some weighted norm inequalities for the area integral},
   journal={Indiana Univ. Math. J.},
   volume={36},
   date={1987},
   number={2},
   pages={277--294},
}

\bib{cump}{article}{
   author={Cruz-Uribe, David},
   author={Martell, Jos{\'e} Mar{\'{\i}}a},
   author={P{\'e}rez, Carlos},
   title={Sharp weighted estimates for classical operators},
   journal={Adv. Math.},
   volume={229},
   date={2012},
   number={1},
   pages={408--441},
}


\bib{htv}{article}{
   author={Hukovic, S.},
   author={Treil, S.},
   author={Volberg, A.},
   title={The Bellman functions and sharp weighted inequalities for square
   functions},
   conference={
      title={Complex analysis, operators, and related topics},
   },
   book={
      series={Oper. Theory Adv. Appl.},
      volume={113},
      publisher={Birkh\"auser},
      place={Basel},
   },
   date={2000},
   pages={97--113},
}

\bib{2912709}{article}{
   author={Hyt{\"o}nen, Tuomas P.},
   title={The sharp weighted bound for general Calder\'on-Zygmund operators},
   journal={Ann. of Math. (2)},
   volume={175},
   date={2012},
   number={3},
   pages={1473--1506},
}
		
\bib{landh}{article}{
  author={Hyt\"{o}nen, T. P.},
  author={Lacey, M. T.},
  title={The $A_p-A_{\infty}$ Inequality for General Calderon-Zygmund Operators},
  date={2011},
  journal={Indiana Math. J.,to appear},
  eprint={http://www.arXiv:1106.4797v1},
  }
  
\bib{l7}{article}{
  author={Hyt\"{o}nen, T.P.},
  author={Lacey, M.T.},
  author={Martikainen, H.},
  author={Orponen, T.},
  author={Reguera, M.C.},
  author={Sawyer, E.T.},
  author={Uriarte-Tuero, Q.},
  title={Weak and Strong Type Estimates for Maximal Truncations of Calder\'{o}n-Zygmund Operators on $A_p$ Weighted Spaces},
  date={2011},
  journal={J D'Anal., to appear}, 
  eprint={http://www.arXiv:1103.5229v1}
  }




\bib{MR2721744}{article}{
   author={Lerner, Andrei K.},
   title={A pointwise estimate for the local sharp maximal function with
   applications to singular integrals},
   journal={Bull. Lond. Math. Soc.},
   volume={42},
   date={2010},
   number={5},
   pages={843--856},
}

\bib{lerner1}{article}{
   author={Lerner, Andrei K.},
   title={Sharp weighted norm inequalities for Littlewood-Paley operators
   and singular integrals},
   journal={Adv. Math.},
   volume={226},
   date={2011},
   number={5},
   pages={3912--3926},
}


\bib{2480568}{article}{
   author={Lerner, Andrei K.},
   author={Ombrosi, Sheldy},
   author={P{\'e}rez, Carlos},
   title={$A_1$ bounds for Calder\'on-Zygmund operators related to a
   problem of Muckenhoupt and Wheeden},
   journal={Math. Res. Lett.},
   volume={16},
   date={2009},
   number={1},
   pages={149--156},
}

\bib{1/7}{article}{
  author={Nazarov, F.},
  author={Treil, S.},
  author={Volberg, A.},
  title={$A_1$ Conjecture: Weak Norm Estimates of Weighted Singular Operators and Bellman Functions},
  date={2010},
  eprint={http://goo.gl/k1xWB},
}

\bib{MR2799801}{article}{
   author={Reguera, Maria Carmen},
   title={On Muckenhoupt-Wheeden conjecture},
   journal={Adv. Math.},
   volume={227},
   date={2011},
   number={4},
   pages={1436--1450},
}

\bib{1011.1767}{article}{
   author={{Reguera}, M.~C.},
   author={{Thiele}, C.},
   title={The Hilbert transform does not map $L^1(Mw) \to L^{1,\infty}(w)$},
   journal={MRL, to appear},
   eprint={http://www.arxiv.org/abs/1011.1767},
   year={2010},
}

\bib{1007.3089}{article}{
	author={Scurry, James},
	title={A Characterization of Two-Weight Inequalities for a Vector-Valued Operator}, 
	   eprint={http://www.arxiv.org/abs/1007.3089},
	   year={2010},
	   }

\bib{voPC}{article}{
   author={Volberg, A.},
   title={Personal Communication},
   }

\bib{wilson1}{article}{
   author={Wilson, Michael},
   title={The intrinsic square function},
   journal={Rev. Mat. Iberoam.},
   volume={23},
   date={2007},
   number={3},
   pages={771--791},
}

\bib{wilson2}{book}{
   author={Wilson, Michael},
   title={Weighted Littlewood-Paley theory and exponential-square
   integrability},
   series={Lecture Notes in Mathematics},
   volume={1924},
   publisher={Springer},
   place={Berlin},
   date={2008},
   pages={xiv+224},
   isbn={978-3-540-74582-2},
}

\bib{wittwer}{article}{
   author={Wittwer, Janine},
   title={A sharp estimate on the norm of the martingale transform},
   journal={Math. Res. Lett.},
   volume={7},
   date={2000},
   number={1},
   pages={1--12},
}

\end{biblist}
\end{bibsection}

\end{document}